\newtheorem{theorem}{Theorem}[section]
\newtheorem{lemma}[theorem]{Lemma}
\theoremstyle{definition}
\theoremstyle{remark}
\numberwithin{equation}{section}
\begin{document}

\title{On the Baire class of n-Dimensional Boundary Functions}


\author[C. P. Wilson]{Connor Paul Wilson}
\address{530 Church Street
Ann Arbor, MI 48109}
\email{dpoae@umich.edu}


\date{}


\begin{abstract}
We show an extention of a theorem of Kaczynski to boundary functions in n-dimensional space. Let $H$ denote the upper half-plane, and let $X$ denote its frontier, the $x$-axis. Suppose that $f$ is a function mapping $H$ into some metric space $Y.$ If $E$ is any subset of $X,$ we will say that a function $\varphi: E \rightarrow Y$ is a boundary function for $f$ if and only if for each $x\in E$ there exists an arc $\gamma$ at $x$ such that $\lim_{z\rightarrow x \atop z\in\gamma} f(z) = \varphi(x)$
\end{abstract}

\maketitle

\section{Introduction}
\subsection{Preliminaries and Notation}

Let $H$ denote the upper half-plane, and let $X$ denote its frontier, the $x$-axis. If $x\in X,$ then by an arc at $x$ we mean a a simple arc $\gamma$ with one endpoint at $x$ such that $\gamma - \{x\} \subseteq H.$ Suppose that $f$ is a function mapping $H$ into some metric space $Y.$ If $E$ is any subset of $X,$ we will say that a function $\varphi: E \rightarrow Y$ is a boundary function for $f$ if and only if for each $x\in E$ there exists an arc $\gamma$ at $x$ such that
$$
\lim_{z\rightarrow x \atop z\in\gamma} f(z) = \varphi(x)
$$
We will also define Baire classes as Kaczynski does in \cite{Kacz}, such that a function $f: M \rightarrow Y$ is said to be of Baire class $O(M, Y)$ if and only if it is continuous; if $\xi$ is an ordinal number greater than or equal to $1,$ then f is said to be of Baire class $\xi(M, Y)$ if and only if there exists a sequence of functions $\left\{f_{n}\right\}_{n=1}^{\infty}$ mapping M into $Y, f_{n}$ being of Baire class $\eta_{n}(M, Y)$ for some $\eta_{n}<\xi$, such that $f_{n} \rightarrow f$ pointwise.

\section{Boundary functions for discontinuous functions}

\begin{theorem}
Let $Y$ be a separable arc-wise connected metric space, with $f: H \rightarrow Y$ a function of Baire class $\xi(H, Y)$ where $\xi \geq 1,$ $E$ a subset of $X$, and $\varphi: E \rightarrow Y$ a boundary function for $f$. Therefore $\varphi$ is of Baire class $\xi + 1(E, Y).$
\end{theorem}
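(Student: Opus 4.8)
The plan is to argue by transfinite induction on $\xi$, using the Lebesgue--Hausdorff characterization of Baire classes: for a metric space $M$ and separable metric $Y$, a map is of Baire class $\le \alpha$ (for $\alpha\ge 1$) exactly when the preimage of every open set is of additive class $\alpha+1$ in $M$. Fix once and for all a countable dense set $\{y_k\}_{k\ge 1}\subseteq Y$ and a countable dense set $D\subseteq H$ of points with rational coordinates. For $S\subseteq H$ let $\Lambda(S)\subseteq X$ be the set of $x$ admitting an arc $\gamma$ at $x$ with $\gamma\setminus\{x\}$ eventually contained in $S$. The elementary but central observation is the forward inclusion: if $x\in E$ and $d(\varphi(x),y_k)<\varepsilon/2$, then the arc $\gamma$ witnessing the boundary value of $\varphi$ at $x$ is eventually inside $f^{-1}\!\big(B(y_k,\varepsilon)\big)$, so $x\in\Lambda\!\big(f^{-1}(B(y_k,\varepsilon))\big)$. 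Since $f$ is of Baire class $\xi$, each $f^{-1}(B(y_k,\varepsilon))$ is of additive class $\xi+1$ in $H$; I would then prove, by an auxiliary induction on the Borel class of $G\subseteq H$, that $\Lambda(G)$ lies in a controlled additive class of $X$, the point being that any arc at $x$ may be replaced by a polygonal arc with vertices in $D$ without changing which sets it is ``eventually inside,'' which turns the quantifier ``there exists an arc'' into a countable quantifier over finite sequences from $D$ together with the condition that the sequence converges to $x$ and its late links avoid/enter $G$. Counting these quantifiers against the class of $G$ gives the complexity bound for $\Lambda$ that the induction needs, and in the base case $\xi=1$ (where $f^{-1}(B(y_k,\varepsilon))$ is $F_\sigma$) one gets directly that every relevant set is of additive class $3$, i.e. the level needed for Baire class $2=\xi+1$.

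The genuine difficulty, and where the hypotheses on $Y$ must be used, is that a boundary function is only a \emph{selection}: a point of $E$ may carry several arcs along which $f$ has different limits, so $\Lambda\!\big(f^{-1}(B(y_k,\varepsilon))\big)\cap E$ is strictly larger than $\varphi^{-1}(B(y_k,\varepsilon))$ in general, and there is no canonical way to recover $\varphi^{-1}(U)$ from the $\Lambda$-sets alone. To defeat this I would not attempt a single closed-form description of $\varphi^{-1}(U)$; instead I would decompose $E=\bigcup_{k,j}E_{k,j}$ with $E_{k,j}=\varphi^{-1}\!\big(B(y_k,1/j)\big)$, and on each piece build the approximating sequence $\varphi_n\colon E\to Y$ explicitly. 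Roughly, $\varphi_n(x)$ is obtained by selecting a vertex $d\in D$ with $|d-x|<1/n$ lying sufficiently far along a polygonal candidate arc at $x$ whose late $f$-values cluster within $1/n$ of $\varphi(x)$ — such a $d$ exists by the defining property of the true witnessing arc — and then moving $f(d)$ back into $Y$ along a fixed arc in $Y$; here arc-wise connectedness of $Y$ is exactly what guarantees the $\varphi_n$ are $Y$-valued and can be interpolated coherently, and the planarity of $H$ is what lets one route an arc at $x$ through any prescribed shrinking sequence of points. The construction must be arranged so that each $\varphi_n$ is itself of Baire class $<\xi+1$ (established by the same $\Lambda$-complexity estimate, now applied to the lower-class data) while $\varphi_n(x)\to\varphi(x)$ for every $x\in E$, which follows because along the actual witnessing arc the values of $f$ really do converge to $\varphi(x)$.

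For the inductive step, suppose the statement holds for every ordinal $\eta$ with $1\le\eta<\xi$, write $f=\lim_n f_n$ pointwise with $f_n$ of Baire class $\eta_n<\xi$, and fix $x\in E$ with witnessing arc $\gamma$. Choosing points $w_j\to x$ along $\gamma$ with $d(f(w_j),\varphi(x))<1/j$ and then indices $n_j\nearrow\infty$ with $d(f_{n_j}(w_j),f(w_j))<1/j$ gives $f_{n_j}(w_j)\to\varphi(x)$; re-routing an arc through the $w_j$ exhibits $\varphi$ as governed by the family $\{f_n\}$ in a way that feeds into the construction of the previous paragraph, with $f$ replaced piecewise by the $f_n$, so that the resulting $\varphi_n$ are of Baire class $<\xi+1$ by the inductive hypothesis. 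Taking a pointwise limit yields $\varphi$ of Baire class $\xi+1$. I expect the main obstacle to be precisely the book-keeping of this second paragraph: simultaneously (i) proving the explicitly constructed $\varphi_n$ land in the correct strictly lower Baire class, which forces the polygonal-arc normalization through $D$ and the careful quantifier count, and (ii) ensuring the selections are coherent enough across $n$ that $\varphi_n\to\varphi$ on all of $E$ despite the non-uniqueness of witnessing arcs and the uncontrolled dependence of the chosen $(w_j,n_j)$ on $x$. I would isolate the routing statement — given $x\in X$ and points lying in a shrinking neighborhood basis at $x$, there is an arc at $x$ passing through all of them — as a standalone lemma, since it is used repeatedly.
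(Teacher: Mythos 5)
Your proposal breaks down at the step you treat as a harmless normalization: the claim that an arc at $x$ may be replaced by a polygonal arc with vertices in the countable dense set $D$ ``without changing which sets it is eventually inside.'' The sets in question are $G = f^{-1}(B(y_k,\varepsilon))$ for $f$ merely of Baire class $\xi$; such a $G$ is an arbitrary set of additive class $\xi+1$ in $H$ and may have empty interior (already for $\xi = 1$ it can be a dense $F_\sigma$ with dense complement). An arc whose tail lies in such a $G$ admits no polygonal approximant staying inside $G$, so the reduction of ``there exists an arc eventually inside $G$'' to countable quantifiers over finite sequences from $D$ fails, and with it the complexity bound for $\Lambda(G)$ on which both your base case and your inductive step rest. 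The inductive step has a second, independent problem: the diagonal sequence $f_{n_j}(w_j)\to\varphi(x)$ runs through infinitely many different functions, so it does not exhibit $\varphi$ as a boundary function of any single $f_n$, and the inductive hypothesis cannot be invoked; in fact no induction on $\xi$ is needed for this theorem at all.

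You correctly identify the selection difficulty (a point of $E$ may carry arcs with different limits, so the $\Lambda$-sets overshoot $\varphi^{-1}(U)$), but the proposal does not contain the idea that actually defeats it. The paper's proof, following Kaczynski, fixes one open $U$, sets $V = Y - \operatorname{clos}(U)$, and chooses for each $x \in A = \varphi^{-1}(U)$ an arc lying in $f^{-1}(U)$ and for each $x \in B = \varphi^{-1}(V)$ an arc lying in $f^{-1}(V)$. These two families are pairwise disjoint, and it is the planar order structure of disjoint arcs attached to the real line --- the monotonicity of the first point at which $\gamma_x$ meets the horizontal line $X_n$, restricted to $M \cap K_n$ --- that yields a set $T \in P^{\xi+2}(X)$ separating $A$ from $B$. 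Lemma 2.2 then converts ``for every open $U$ there is such a separating set with $\varphi^{-1}(U) \subseteq T \cap E \subseteq \varphi^{-1}(\operatorname{clos}(U))$'' into the conclusion that $\varphi$ is of Baire class $\xi+1(E,Y)$. No pointwise description of $\varphi^{-1}(U)$ in terms of $\Lambda$-sets, and no explicit approximating sequence $\varphi_n$, is ever attempted. If you want to salvage your outline, the separation lemma together with the disjoint-arc ordering argument is the missing engine; the polygonal normalization and the hand-built $\varphi_n$ cannot substitute for it.
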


\begin{proof}
Let $U$ be an open subset of $Y$ such that $V =  Y-\operatorname{clos}(U)$. Set $A = \varphi^{-1}(U),\ B = \varphi^{-1}(V),\ C = A\cup B.$ Notice that we clearly have an empty intersection between $A$ and $B$.
$\forall x\in C$, choose an arc $\gamma_{x}$ at $x$ such that:

$$
\lim_{z\rightarrow x \atop z\in\gamma_{x}} f(z) = \varphi(x)
$$
with
$$
\gamma_{x} \subseteq\{z: \mid z - x \mid \leq 1\}
$$
where
$$
\begin{cases}
\gamma_{x} -\{x\}\subseteq f^{-1}(U)& \text{ if } x\in A \\ 
\gamma_{x} -\{x\}\subseteq f^{-1}(V) & \text{ if } x\in B 
\end{cases}
$$
Note once again the empty intersection $\gamma_{x} \cap \gamma_{y} = \emptyset$ for $x\in A\ \wedge\ y\in B$

Let us define the terminology $\gamma_{x}$ \textit{meets} $\gamma_{y}$ in $\operatorname{clos}(H_{n})$ provided that the two arcs have respective subarcs, $\gamma_{x}\prime$ and $\gamma_{y}\prime$ with $x\in \gamma_{x}\prime \subseteq \operatorname{clos}(H_{n})$ and $x\in \gamma_{y}\prime \subseteq \operatorname{clos}(H_{n})$, with $\gamma_{x}\prime \cap \gamma_{y}\prime \neq \varnothing$

Let:
$$
L_{a}:={x\in A : \forall n\exists y, \text{ such that } y\in C,\ y\neq x, \ \gamma_{y} \text{ meets } \gamma_{x} \text{ in }\operatorname{clos}(H_{n})}
$$
$$
L_{b}:={x\in B : \forall n\exists y, \text{ such that } y\in C,\ y\neq x,\  \gamma_{y} \text{ meets } \gamma_{x} \text{ in }\operatorname{clos}(H_{n})}
$$
$$
M_{a}:={x\in A : \exists n, \gamma_{x} \text{ meets no } \gamma_{y} \text{ in }\operatorname{clos}(H_{n})}
$$
$$
M_{b}:={x\in B : \exists n, \gamma_{x} \text{ meets no } \gamma_{y} \text{ in }\operatorname{clos}(H_{n})}
$$
$$
L = L_{a} \cup L_{b}
$$
$$
M = M_{a} \cup M_{b}
$$
$L_{a}, L_{b}, M_{a}, $ and $M_{b}$ are notably pairwise disjoint, and $A = L_{a} \cup M_{a},$ $B = L_{b} \cup M_{b}.$

Let $n(x) \in \mathbb{Z}^{+}$ for each $x \in M$ such that $\gamma_{x}$ meets no $\gamma_{y}$ in $\operatorname{clos}(H_{n(x)})$ such that $x \neq y,$ where $n \geq n(x)$ gives the obvious no meeting case in $\operatorname{clos}(H_{n})$. Moreover, take 
$$
K_{n} := \{x\in C : \gamma_{x} \text{ meets } X_{n} \wedge \text{ if } x\in M, n \geq n(x)\}
$$
It is clear that we have for every $n, K_{n} \subseteq K_{n+1}$, as well as $C = \bigcup_{n=1}^{\infty}K_{n}.$ It follows by the work of Kaczynski\cite{Kacz} and the following lemma that we have Theorem $2.1$.

\begin{lemma}
Let Y be a separable arc-wise connected metric space, E any metric space, and $\varphi: E\rightarrow Y$ a function such that for every open set $U \subseteq Y$ there exists a set $T \in P^{\xi + 1}(E)$ where $\varphi^{-1}(U)\subseteq T\subseteq \varphi^{-1}(\operatorname{clos}(U))$. Thus for $\xi \geq 2,$ $\varphi$ is of Baire class $\xi(E, Y).$
\end{lemma}

\begin{proof}
Let $\mathcal{B}$ be a countable base for $Y,$ and suppose $W$ is some open subset of $Y.$ Let:
$$
\mathcal{A}(W) = \{U\in \mathcal{B} : \operatorname{clos}(U)\subseteq W\}
$$
By Kaczynski, we take:
$$
W = \bigcup_{U\in \mathcal{A}(W)} U = \bigcup_{U\in \mathcal{A}(W)}\operatorname{clos}(U).
$$
$\forall U\in \mathcal{B},$ let $T(U) \in P^{\xi + 1}(E)$ be chosen so that $\varphi^{-1}(U)\subseteq T(U)\subseteq \varphi^{-1}(\operatorname{clos}(U)).$ Thus we have:
$$
\begin{aligned}
\varphi^{-1}(W) &= \bigcup_{U\in \mathcal{A}(W)}\varphi^{-1}(U) \subseteq \bigcup_{U\in \mathcal{A}(W)}T(U)  \\
&\subseteq \bigcup_{U\in \mathcal{A}(W)}\varphi^{-1}(\operatorname{clos}(U)) = \varphi^{-1}(W).
\end{aligned}
$$
Therefore $\varphi^{-1}(W) = \bigcup_{U\in \mathcal{A}(W)}T(U)$, and given $P^{\xi + 1}(E)$ is closed under countable unions, we have $\varphi^{-1}(W)\in P^{\xi + 1}(E),$ and $\varphi$ is of Baire class $\xi(E, Y)$
\end{proof}
And following from above we therefore have:
$$
\varphi^{-1}(U) = A \subseteq T \cap E \subseteq E - B = E -\varphi^{-1}(V) = \varphi^{-1}(\operatorname{clos}(U))
$$
for some $T \in P^{\xi + 2}(X)$, and we know $T\cap E \in P^{\xi + 2}(E)$ which by the above lemma gives us $\varphi$ of Baire class $\xi + 1(E, Y).$
\end{proof}

\section{Sets of curvilinear convergence for $\mathbb{R}^{3}$}

Let $f: H \rightarrow Y$ of Baire class $\xi(H, Y)$, and $\varphi: E \rightarrow Y$ a boundary function of $f.$ Let us define some function to analyse the properties of $M_{a},$ following Kaczynski. Take $\pi: \mathbb{R}^{3}\rightarrow\mathbb{R}^{2}$ such that 
$$
\pi(\langle x, y, z\rangle) = \left \| \langle x, y\rangle \right \|_{2}.
$$
If $\left \| \langle x, y\rangle \right \|_{2} \in M \cap K_{n},$ then let us define $p_{n}(\left \| \langle x, y\rangle \right \|_{2})$ as the first point of $X_{n}$ reached along $\gamma_{x}$ starting from $x.$ It is clear thus that by Kaczynski, the function $\pi(p_{n}(\left \| \langle x, y\rangle \right \|_{2})$ is strictly increasing on $M \cap K_{n}.$ Thus by the above logic, and Lemma $2.2$, we can show the following theorem:
\begin{theorem}
Let $Y$ be a separable arc-wise connected metric space in $\mathbb{R}^{n}$, with $f: H \rightarrow Y$ a function of Baire class $\xi + n - 1(H, Y)$ where $\xi \geq 1,$ $E$ a subset of $X$, and $\varphi: E \rightarrow Y$ a boundary function for $f$. Therefore $\varphi$ is of Baire class $\xi + n(E, Y).$
\end{theorem}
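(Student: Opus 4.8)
The plan is to reduce Theorem~3.1 to the one‑dimensional statement Theorem~2.1 by working one Euclidean coordinate at a time, so that the extra ``$+\,(n-1)$'' in the hypothesis and the extra ``$+\,n$'' in the conclusion are just Theorem~2.1's single Baire‑class step carried through each coordinate. Fix $Y\subseteq\mathbb{R}^{n}$, write $\rho_{i}\colon\mathbb{R}^{n}\to\mathbb{R}$ for the $i$‑th coordinate projection, and set $f_{i}=\rho_{i}\circ f$ and $\varphi_{i}=\rho_{i}\circ\varphi$. Since $\rho_{i}$ is continuous, $f_{i}$ is again of Baire class $\xi+n-1$ (a transfinite induction shows post‑composition with a continuous map preserves Baire class), now as a map into the interval $\rho_{i}(Y)\subseteq\mathbb{R}$, which is a separable arc‑wise connected metric space. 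Moreover, if $\gamma$ is an arc at $x\in E$ with $\lim_{z\to x,\,z\in\gamma}f(z)=\varphi(x)$, then along the \emph{same} arc $\gamma$ we get $f_{i}(z)\to\varphi_{i}(x)$; hence $\varphi_{i}$ is a boundary function for $f_{i}$.

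Applying Theorem~2.1 with $\xi$ replaced by $\xi+n-1\ (\ge 1)$ to each $f_{i}$ then yields that $\varphi_{i}$ is of Baire class $(\xi+n-1)+1=\xi+n$ as a real‑valued function on $E$. It remains to reassemble the $\varphi_{i}$ into $\varphi$ without losing Baire class and while landing inside $Y$ rather than merely inside $\mathbb{R}^{n}$. For this I would invoke Lemma~2.2 directly: given an open $U\subseteq Y$, write $U=Y\cap O$ with $O\subseteq\mathbb{R}^{n}$ open, cover $O$ by countably many open boxes, and observe that $\varphi^{-1}$ of a box is the finite intersection $\bigcap_{i}\varphi_{i}^{-1}(I_{i})$ of preimages of open intervals. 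Each $\varphi_{i}^{-1}(I_{i})$ lies in $P^{\xi+n+1}(E)$ because $\varphi_{i}$ is of Baire class $\xi+n$, and $P^{\xi+n+1}(E)$ is closed under finite intersections and countable unions, so $\varphi^{-1}(U)\in P^{\xi+n+1}(E)$. Taking $T=\varphi^{-1}(U)$ in Lemma~2.2 (whose hypothesis $\xi+n\ge 2$ holds since $\xi\ge 1$) gives that $\varphi$ is of Baire class $\xi+n(E,Y)$, as claimed.

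Structurally this realizes exactly the one‑coordinate‑per‑Baire‑class picture suggested in Section~3: the projection $\pi$ and the points $p_{n}$, together with the observation that $\pi\circ p_{n}$ is strictly increasing on $M\cap K_{n}$, are the devices that make Theorem~2.1 hold for arc‑wise connected ranges, and here they are used only inside the black box ``Theorem~2.1'', once for each coordinate axis of $\mathbb{R}^{n}$.

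I expect the only real difficulty to be bookkeeping rather than any new idea. The points to watch are: that post‑composition with a continuous map and the box decomposition of open sets behave correctly at the ordinal level, so that no off‑by‑one creeps into the shift $\xi\mapsto\xi+n$; that $\rho_{i}(Y)$ genuinely meets the hypotheses of Theorem~2.1 (it is an interval, hence fine, but in general the arc‑wise connectedness of a continuous image of $Y$ needs the standard fact that a path in a Hausdorff space contains a genuine arc); and that the passage from $\varphi$ viewed as a map into $\mathbb{R}^{n}$ to $\varphi$ viewed as a map into $Y$ is legitimate — which is precisely why the last step is routed through Lemma~2.2, whose conclusion is about $\varphi$ into $Y$, rather than through a bare product‑of‑Baire‑functions argument into $\mathbb{R}^{n}$, whose approximating maps need not take values in $Y$.
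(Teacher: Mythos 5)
Your argument is correct for the theorem as literally stated, but it is both a different route from the paper's and, more importantly, self-undermining in an instructive way: the step in which you ``apply Theorem 2.1 with $\xi$ replaced by $\xi+n-1$'' can be applied to $f$ and $\varphi$ themselves, because $Y$ is already a separable arc-wise connected metric space and $\xi+n-1\geq 1$; Theorem 2.1 then yields Baire class $(\xi+n-1)+1=\xi+n$ for $\varphi$ in one line, with no coordinate projections $\rho_{i}$, no box decomposition, and no second pass through Lemma 2.2 --- and the hypothesis $Y\subseteq\mathbb{R}^{n}$ is never used. So your coordinate-by-coordinate scaffolding is sound but redundant. The paper, by contrast, does not substitute into Theorem 2.1; it sketches a re-run of the geometric machinery behind Theorem 2.1 (the projection $\pi$, the first-hitting points $p_{n}$ on $X_{n}$, and the claimed monotonicity of $\pi\circ p_{n}$ on $M\cap K_{n}$), which only has content if $H$ and $X$ are secretly the upper half-space of $\mathbb{R}^{3}$ and its two-dimensional frontier, as the section title suggests. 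Under that reading Theorem 2.1 is not available verbatim (its proof orders the arcs $\gamma_{x}$ by where they first meet $X_{n}$, which is special to a one-dimensional boundary), and both your reduction and the one-line substitution would miss the actual difficulty. Two technical points if you keep your version: restricting the codomain of $f_{i}$ from $\mathbb{R}$ to the interval $\rho_{i}(Y)$ needs justification (truncate the approximating sequence into $\rho_{i}(Y)$), since Baire class is sensitive to the target space; and the reassembly step tacitly uses the equivalence between Baire class $\eta$ and preimages of open sets lying in $P^{\eta+1}$, which the paper never states but which Kaczynski's framework supplies.
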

Although this does not resolve the fourth open problem of Kaczynski's work, it does provide an extension to a theorem shown, and is valuable nonetheless to the field.

\bibliographystyle{amsplain}

\end{document}